\newtheorem{theorem}{Theorem}
\newtheorem{lemma}[theorem]{Lemma}
\newtheorem{proposition}[theorem]{Proposition}
\newtheorem{corollary}[theorem]{Corollary}
\newtheorem{remark}[theorem]{Remark}
\title{On the nonexistence of harmonic and bi-harmonic maps}
\author{
  Ahmed Mohammed Cherif\\
Mascara University, Faculty of Exact Sciences, \\Laboratory of Geometry, Analysis, Controle and Applications, 29000, Algeria.\\
  \texttt{a.mohammedcherif@univ-mascara.dz}
}
\begin{document}
\maketitle

\begin{abstract}
In this paper, we study the existence of
harmonic and bi-harmonic maps into
Riemannian manifolds admitting a conformal vector field, or
a nontrivial Ricci solitons.
\end{abstract}

\keywords{Harmonic maps; Bi-harmonic maps; Ricci solitons; Conformal vector fields.}

\maketitle

\section{Preliminaries and Notations}
We give some definitions.
(1) Let $(M,g)$ be a  Riemannian manifold. By $R$ and $\operatorname{Ric}$
we denote respectively the Riemannian curvature tensor and the Ricci tensor of  $(M,g)$.
Thus $R$ and $\operatorname{Ric}$ are defined by:
\begin{equation}\label{eq1.1}
    R(X,Y)Z=\nabla_X \nabla_Y Z-\nabla_Y \nabla_X Z-\nabla_{[X,Y]}Z,
\end{equation}
\begin{equation}\label{eq1.2}
    \operatorname{Ric}(X,Y)=g(R(X,e_i)e_i,Y),
\end{equation}
where $\nabla$ is the Levi-Civita connection with respect to $g$, $\{e_i\}$ is an orthonormal frame,  and $X,Y,Z\in\Gamma(TM)$.
The divergence of $(0,p)$-tensor $\alpha$ on $M$ is defined by:
\begin{equation}\label{1.2}
    (\operatorname{div} \alpha)(X_1,...,X_{p-1})=(\nabla_{e_i}\alpha)(e_i,X_1,...,X_{p-1}),
\end{equation}
where $X_1,...,X_{p-1}\in\Gamma(TM)$, and $\{e_i\}$ is an orthonormal frame. Given a smooth function $\lambda$ on $M$,
the gradient of $\lambda$ is defined by:
\begin{equation}\label{eq1.3}
    g(\operatorname{grad}\lambda,X)=X(\lambda),
\end{equation}
the Hessian of $\lambda$ is defined by:
\begin{equation}\label{eq1.5}
    (\operatorname{Hess} \lambda)(X,Y)=g(\nabla_X \operatorname{grad}\lambda,Y),
\end{equation}
where $X,Y\in\Gamma(TM)$ (for more details, see for example \cite{ON}).\\
(2) A vector field $\xi$ on a Riemannian manifold $(M,g)$ is called a conformal if $\mathcal{L}_{\xi}g=2fg$, for some smooth function $f$ on $M$, where
$\mathcal{L}_{\xi}g$ is the Lie derivative of the metric $g$ with respect to $\xi$, that is:
\begin{equation}\label{eq1.6}
    g(\nabla_X \xi,Y)+g(\nabla_Y\xi,X)=2fg(X,Y), \quad X,Y\in\Gamma(TM).
\end{equation}
The function $f$ is then called the potential function of the conformal vector field $\xi$.
If $\xi$ is conformal with constant potential function $f$,
then it is called  homothetic, while $f=0$ it is Killing (see \cite{BW}, \cite{WH}, \cite{yano}).\\
(3) A Ricci soliton structure on a Riemannian manifold $(M,g)$ is the choice of a
smooth vector field $\xi$ satisfying the soliton equation:
\begin{equation}\label{eq1.6}
   \operatorname{Ric}+\frac{1}{2}\mathcal{L}_\xi g=\lambda g,
\end{equation}
for some constant $\lambda\in\mathbb{R}$,  where $\mathcal{L}_{\xi}g$ is the Lie derivative of the metric $g$ with respect to $\xi$.
The Ricci soliton $(M,g,\xi,\lambda)$ is said to be shrinking,
steady or expansive according to whether the coefficient $\lambda$ appearing in equation (\ref{eq1.6}) satisfies
$\lambda > 0$, $\lambda = 0$ or $\lambda < 0$.
In the special case where $\xi =\operatorname{grad}f$, for some smooth function $f$ on $M$, we say
that $(M,g,\operatorname{grad}f,\lambda)$ is a gradient Ricci soliton with potential $f$. In this situation, the soliton equation reads:
\begin{equation}\label{eq1.7}
   \operatorname{Ric}+\operatorname{Hess}f=\lambda g,
\end{equation}
(see \cite{H1}, \cite{H2}, \cite{SMA}). If $\xi=0$, we recover the definition of an Einstein metric with Einstein constant $\lambda$. If $(M, g)$ is not Einstein, we call the soliton nontrivial.\\
(4) A  vector field $\xi$ on a Riemannian manifold $(M, g)$ is said to
be a Jacobi-type vector field if it satisfies:
\begin{equation}\label{eq1.8}
    \nabla_X\nabla_X \xi-\nabla_{\nabla_X X}\xi+R(\xi,X)X=0,\quad X\in\Gamma(TM).
\end{equation}
Note that, there are Jacobi-type vector fields on a Riemannian manifold
which are not Killing vector fields (see \cite{Deshmukh}).\\
(5) Let $\varphi:(M,g)\rightarrow(N,h)$ be a smooth map between two Riemannian manifolds,
$\tau(\varphi)$ the tension field of $\varphi$ given by:
\begin{equation}\label{eq1.9}
\tau(\varphi)=\operatorname{trace}\nabla d\varphi=\nabla^{\varphi}_{e_i}d\varphi(e_i)-d\varphi(\nabla^{M}_{e_i}e_i),
\end{equation}
where $\nabla^{M}$ is the Levi-Civita connection of $(M,g)$, $\nabla^{\varphi}$ denote the pull-back connection on $\varphi^{-1}TN$
and $\{e_i\}$ is an orthonormal frame on $(M,g)$.
Then $\varphi$ is called harmonic if the tension field vanishes, i.e. $\tau(\varphi)=0$ (see \cite{BW}, \cite{CMO}, \cite{ES}, \cite{YX}).
We define the index form for harmonic maps
by (see \cite{C}, \cite{OND}):
\begin{equation}\label{eq1.10}
I(v,w)=\int_{M}h(J_{\varphi}(v),w)v^g,\quad v,w\in\Gamma(\varphi^{-1}TN)
\end{equation}
(or over any compact subset $D\subset M$), where:
\begin{eqnarray}\label{eq1.11}
J_{\varphi}(v)
   &=&\nonumber-\operatorname{trace}R^N(v,d\varphi)d\varphi-\operatorname{trace}(\nabla^{\varphi})^2 v  \\
   &=&  -R^N(v,d\varphi(e_i))d\varphi(e_i)-\nabla^{\varphi}_{e_i}\nabla^{\varphi}_{e_i}v+\nabla^{\varphi}_{\nabla^{M}_{e_i}e_i}v,
\end{eqnarray}
$R^N$ is the curvature tensor of $(N,h)$, $\nabla^{N}$ is the Levi-Civita connection of $(N,h)$,
and $v^g$ is the volume form of $(M,g)$
(see \cite{BW}). If $\tau_2(\varphi)\equiv J_{\varphi}(\tau(\varphi))$ is null on $M$,
then $\varphi$ is called a bi-harmonic map (see \cite{CMO}, \cite{Jiang}, \cite{LO}).

\section{Main Results}

\subsection{Harmonic maps and conformal vector fields}
\begin{proposition}\label{proposition3.1}
Let $(M,g)$ be a compact orientable  Riemannian manifold without boundary,
and $(N,h)$  a Riemannian manifold admitting a conformal vector field $\xi$ with potential function $f>0$ at any point.
Then, any harmonic map $\varphi$ from $(M,g)$ to $(N,h)$ is constant.
\end{proposition}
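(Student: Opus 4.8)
The plan is to run a Bochner-type integration argument built around the pullback of the conformal field $\xi$. First I would introduce the section $V=\xi\circ\varphi\in\Gamma(\varphi^{-1}TN)$ and the $1$-form $\omega$ on $M$ defined by $\omega(X)=h(d\varphi(X),V)$ for $X\in\Gamma(TM)$. The whole argument then reduces to computing the divergence $\operatorname{div}\omega=\sum_i(\nabla_{e_i}\omega)(e_i)$ in a local orthonormal frame $\{e_i\}$ and integrating it over the compact manifold $M$.

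Next I would expand the divergence by the Leibniz rule and the definition of $\omega$, writing
\[
\operatorname{div}\omega=\sum_i h\big(\nabla^\varphi_{e_i}d\varphi(e_i)-d\varphi(\nabla^M_{e_i}e_i),\,V\big)+\sum_i h\big(d\varphi(e_i),\,\nabla^\varphi_{e_i}V\big).
\]
The first sum is exactly $h(\tau(\varphi),V)$ by the definition (\ref{eq1.9}) of the tension field, and it vanishes because $\varphi$ is harmonic. For the second sum I would use the standard property of the pullback connection, $\nabla^\varphi_{e_i}V=\nabla^\varphi_{e_i}(\xi\circ\varphi)=(\nabla^N_{d\varphi(e_i)}\xi)\circ\varphi$, so that this sum becomes $\sum_i h(d\varphi(e_i),\nabla^N_{d\varphi(e_i)}\xi)$.

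The key step is to feed the conformal condition $h(\nabla^N_U\xi,W)+h(\nabla^N_W\xi,U)=2(f\circ\varphi)\,h(U,W)$ into this last expression. Setting $U=W=d\varphi(e_i)$ gives $h(\nabla^N_{d\varphi(e_i)}\xi,d\varphi(e_i))=(f\circ\varphi)\,|d\varphi(e_i)|^2$, so the whole sum collapses and I obtain the clean pointwise identity
\[
\operatorname{div}\omega=(f\circ\varphi)\,|d\varphi|^2,\qquad |d\varphi|^2=\sum_i h(d\varphi(e_i),d\varphi(e_i)).
\]

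Finally, since $M$ is compact, orientable and without boundary, the divergence theorem yields $\int_M\operatorname{div}\omega\,v^g=0$, hence $\int_M(f\circ\varphi)\,|d\varphi|^2\,v^g=0$. As $f>0$ everywhere on $N$, the integrand is nonnegative, so it must vanish identically; then $f\circ\varphi>0$ forces $|d\varphi|^2=0$, i.e. $d\varphi\equiv 0$, and $\varphi$ is constant. I expect the only delicate point to be the bookkeeping in the divergence computation, in particular isolating the tension-field term and justifying $\nabla^\varphi_{e_i}(\xi\circ\varphi)=(\nabla^N_{d\varphi(e_i)}\xi)\circ\varphi$; once the identity $\operatorname{div}\omega=(f\circ\varphi)|d\varphi|^2$ is established, the strict positivity of $f$ closes the argument at once.
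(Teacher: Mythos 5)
Your proposal is correct and follows essentially the same route as the paper: the same $1$-form $\omega(X)=h(\xi\circ\varphi,d\varphi(X))$, the same divergence computation using harmonicity and the conformal condition to arrive at $\operatorname{div}\omega=(f\circ\varphi)|d\varphi|^2$, and the same conclusion via the divergence theorem and the strict positivity of $f$. The only cosmetic difference is that you expand the divergence by the Leibniz rule and isolate the tension-field term explicitly, whereas the paper works in a normal frame so that term never appears; both are valid bookkeeping choices.
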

\begin{proof}
Let $X\in\Gamma(TM)$, we set:
\begin{equation}\label{eq3.1}
    \omega(X)=h\big(\xi\circ\varphi,d\varphi(X)\big),
\end{equation}
let $\{e_i\}$ be a normal orthonormal frame at $x\in M$, we have:
\begin{equation}\label{eq3.2}
    \operatorname{div}^M\omega
   = e_i\big[h\big(\xi\circ\varphi,d\varphi(e_i)\big)\big],
\end{equation}
by equation (\ref{eq3.2}), and the harmonicity condition of $\varphi$, we get:
\begin{eqnarray}\label{eq3.3}
\operatorname{div}^M\omega
   &=& h\big(\nabla^\varphi _{e_i} (\xi\circ\varphi),d\varphi(e_i)\big),
\end{eqnarray}
since $\xi$ is a conformal vector field,
we find that:
\begin{equation}\label{eq3.4}
  \operatorname{div}^M\omega
  =(f\circ\varphi)  h\big(d\varphi(e_i) ,d\varphi(e_i)\big)=(f\circ\varphi)|d\varphi|^2,
\end{equation}
the Proposition \ref{proposition3.1} follows from equation (\ref{eq3.4}), and the divergence theorem (see \cite{BW}), with $f>0$  on $N$.
\end{proof}

\begin{remark}
(1) Proposition \ref{proposition3.1} remains true if the potential function $f<0$ on $N$ (consider the conformal vector field $\bar{\xi}=-\xi$).\\
(2) If the potential function is non-zero constant, that is $\mathcal{L}_\xi h=2kh$ on $(N,h)$ with $k\neq0$,
 then any harmonic map $\varphi$ from a compact orientable  Riemannian manifold without boundary  $(M,g)$ to $(N,h)$  is necessarily constant (see \cite{cherif}).\\
(3) An harmonic map from
  a compact orientable  Riemannian manifold without boundary to
 a Riemannian manifold admitting a Killing vector field  is not necessarily constant (for example the identity map on the unit $(2n+1)$-dimensional sphere on $\mathbb{R}^{2n+2}$, note that
the unit odd-dimensional sphere admits a Killing vector field (see \cite{BL1}).
\end{remark}
From Proposition \ref{proposition3.1} we get the following result:
\begin{corollary}
Let $(\overline{N},\overline{h})$ be an $n$-dimensional Riemannian manifold which admits a Killing vector field $\overline{\xi}$. Consider $(N,h)$ a  Riemannian hypersurface of $(\overline{N},\overline{h})$ such that $h$ is the induced metric of $\overline{h}$ on $N$. Suppose that:
\begin{itemize}
  \item $(N,h)$ is totally umbilical, that is:
$$B(X,Y) =\rho h(X,Y)\eta,\quad\forall X,Y\in\Gamma(TN),$$ for some smooth function $\rho$ on $N$, where $B$ is the second fundamental form of $N$ on $\overline{N}$ given by $B(X,Y)=(\overline{\nabla}_X Y)^\perp$,  $\overline{\nabla}$ is the Levi-Civita connection on $\overline{N}$, and $\eta$ is the unit normal to $N$;
  \item the function $\overline{h}(\overline{\xi},H)\neq0$  everywhere on  $N$, where
$H$ is the mean curvature of $(N,h)$ given by the formula:
$$H=\frac{1}{n-1}\operatorname{trace}_h B.$$
\end{itemize}
Then, any harmonic map from
a compact orientable  Riemannian manifold without boundary to $(N,h)$ is constant.
\end{corollary}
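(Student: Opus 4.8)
The strategy is to construct on $(N,h)$ a conformal vector field with nowhere-vanishing potential function and then invoke Proposition \ref{proposition3.1}. The natural candidate is the tangential projection of the ambient Killing field. I would decompose $\overline{\xi}$ along $N$ as $\overline{\xi}=\xi+\sigma\eta$, where $\xi:=\overline{\xi}^{\top}\in\Gamma(TN)$ is the component tangent to $N$ and $\sigma:=\overline{h}(\overline{\xi},\eta)$. The claim to be established is that $\xi$ is conformal on $(N,h)$ with potential function $f=\rho\sigma$.

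To prove the claim I would differentiate using the structure equations of the hypersurface. For $X\in\Gamma(TN)$ the Gauss formula gives $\overline{\nabla}_X\xi=\nabla_X\xi+B(X,\xi)=\nabla_X\xi+\rho\,h(X,\xi)\eta$, while the totally umbilical hypothesis forces the shape operator to satisfy $A_\eta=\rho\,\mathrm{Id}$, so the Weingarten formula yields $\overline{\nabla}_X\eta=-\rho X$. Combining these,
\[
\overline{\nabla}_X\overline{\xi}=\nabla_X\xi-\rho\sigma\,X+\big(\rho\,h(X,\xi)+X(\sigma)\big)\eta,
\]
and taking the tangential part gives $(\overline{\nabla}_X\overline{\xi})^{\top}=\nabla_X\xi-\rho\sigma\,X$. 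Feeding this into the Killing identity $\overline{h}(\overline{\nabla}_X\overline{\xi},Y)+\overline{h}(\overline{\nabla}_Y\overline{\xi},X)=0$ for $X,Y\in\Gamma(TN)$, the normal terms drop out and I obtain $h(\nabla_X\xi,Y)+h(\nabla_Y\xi,X)=2\rho\sigma\,h(X,Y)$, which is exactly the defining equation of a conformal vector field with potential $f=\rho\sigma$.

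It then remains to identify this potential with the quantity appearing in the hypothesis. Since $N$ has dimension $n-1$, tracing the umbilical relation gives $\operatorname{trace}_h B=(n-1)\rho\,\eta$, hence $H=\rho\,\eta$ and $f=\rho\sigma=\rho\,\overline{h}(\overline{\xi},\eta)=\overline{h}(\overline{\xi},H)$. By hypothesis $f$ is nowhere zero, and being a smooth function on the connected manifold $N$ it must have constant sign, so either $f>0$ everywhere or $f<0$ everywhere. In either case Proposition \ref{proposition3.1} (using Remark (1) when $f<0$) shows that every harmonic map from a compact orientable Riemannian manifold without boundary into $(N,h)$ is constant. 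I expect the main obstacle to be the middle step: carrying out the tangential/normal bookkeeping carefully enough that the totally umbilical condition collapses the Killing equation for $\overline{\xi}$ on $\overline{N}$ into the conformal equation for $\xi$ on $N$, and recognising that the resulting potential function is precisely $\overline{h}(\overline{\xi},H)$.
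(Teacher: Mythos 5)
Your proposal is correct and follows essentially the same route as the paper: the same decomposition $\overline{\xi}=\xi+\sigma\eta$, the same collapse of the Killing equation into the conformal equation with potential $\rho\sigma=\overline{h}(\overline{\xi},H)$ via umbilicity, and the same conclusion through Proposition \ref{proposition3.1}. The only differences are presentational --- you derive the key identity from the Gauss and Weingarten formulas where the paper cites it as equation (\ref{eq5.1}) from \cite{Duggal}, and you spell out the constant-sign argument (invoking Remark (1) for $f<0$) that the paper leaves implicit.
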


\begin{proof}
It is possible to express $\overline{\xi}$ as $\overline{\xi}=\xi+f\eta$, where $\xi$ is tangent to $N$ and $f$ is a smooth function on $N$. Thus we have:
\begin{equation}\label{eq5.1}
    (\mathcal{L}_{\overline{\xi}} \overline{h})(X,Y)=(\mathcal{L}_\xi h)(X,Y)+f\{\overline{h}(\overline{\nabla}_X \eta,Y)+\overline{h}(\overline{\nabla}_Y \eta,X)\},
\end{equation}
where $X,Y\in\Gamma(TN)$ (see \cite{Duggal}),
by equation (\ref{eq5.1}) with  $ \mathcal{L}_{\overline{\xi}} \overline{h}=0$, we get:
\begin{equation}\label{eq5.2}
   (\mathcal{L}_\xi h)(X,Y)=2f\overline{h}(\eta,B(X,Y)),
\end{equation}
since  $N$ is totally umbilical, (\ref{eq5.2}) becomes:
\begin{equation}\label{eq5.3}
   (\mathcal{L}_\xi h)(X,Y)=2f\rho h(X,Y),
\end{equation}
the Corollary follows from Proposition \ref{proposition3.1} and equation (\ref{eq5.3}) with:
 $$f\rho=\overline{h}(\overline{\xi},\eta)\overline{h}(H,\eta)=\overline{h}(\overline{\xi},H).$$
\end{proof}
In the case of non-compact Riemannian manifold, we obtain the following results:
\begin{theorem}\label{theorem3}
Let $(M,g)$ be a complete non-compact Riemannian manifold,
and $(N,h)$  a Riemannian manifold admitting a conformal vector field $\xi$ with potential function $f>0$ at any point.
If $\varphi:(M,g)\longrightarrow(N,h)$ is harmonic map, satisfying:
\begin{equation}\label{eq3.5}
    \int_M \frac{|\xi\circ\varphi|^2}{f\circ\varphi} v^g<\infty,
\end{equation}
then $\varphi$ is constant.
\end{theorem}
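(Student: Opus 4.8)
The plan is to reuse the computation from the proof of Proposition \ref{proposition3.1} essentially verbatim. With the same $1$-form $\omega(X)=h(\xi\circ\varphi,d\varphi(X))$, harmonicity of $\varphi$ together with the conformal condition yields the pointwise identity $\operatorname{div}^M\omega=(f\circ\varphi)|d\varphi|^2$. In the compact case one integrates this and invokes the divergence theorem; the only new difficulty here is that $M$ is non-compact, so the divergence theorem is unavailable and the integral of a divergence need not vanish. The entire burden of the proof is therefore to replace the divergence theorem by a localized (cutoff) version that exploits the finiteness hypothesis (\ref{eq3.5}).

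First I would fix a point $x_0\in M$ and, using completeness (so that geodesic balls $B_r=B(x_0,r)$ are relatively compact by Hopf--Rinow), choose for each $r>0$ a Lipschitz cutoff function $\psi_r$ with $0\le\psi_r\le1$, $\psi_r\equiv1$ on $B_r$, $\operatorname{supp}\psi_r\subset B_{2r}$, and $|\operatorname{grad}\psi_r|\le C/r$ for a universal constant $C$. Since $\psi_r^2\omega$ has compact support, the divergence theorem gives $\int_M\operatorname{div}^M(\psi_r^2\omega)\,v^g=0$. Expanding $\operatorname{div}^M(\psi_r^2\omega)=\psi_r^2\operatorname{div}^M\omega+2\psi_r\,\omega(\operatorname{grad}\psi_r)$ and substituting the identity for $\operatorname{div}^M\omega$ produces
$$\int_M\psi_r^2(f\circ\varphi)|d\varphi|^2\,v^g=-2\int_M\psi_r\,h\big(\xi\circ\varphi,d\varphi(\operatorname{grad}\psi_r)\big)\,v^g.$$

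Next I would estimate the right-hand side. Bounding $|h(\xi\circ\varphi,d\varphi(\operatorname{grad}\psi_r))|\le|\xi\circ\varphi|\,|d\varphi|\,|\operatorname{grad}\psi_r|$ and writing the integrand as the product of $\psi_r\sqrt{f\circ\varphi}\,|d\varphi|$ and $(|\xi\circ\varphi|/\sqrt{f\circ\varphi})\,|\operatorname{grad}\psi_r|$, Young's inequality $2ab\le\tfrac12a^2+2b^2$ lets me absorb half of the left-hand side, leaving
$$\tfrac12\int_M\psi_r^2(f\circ\varphi)|d\varphi|^2\,v^g\le 2\int_{B_{2r}\setminus B_r}\frac{|\xi\circ\varphi|^2}{f\circ\varphi}|\operatorname{grad}\psi_r|^2\,v^g\le\frac{2C^2}{r^2}\int_M\frac{|\xi\circ\varphi|^2}{f\circ\varphi}\,v^g.$$
By hypothesis (\ref{eq3.5}) the last integral is finite, so the right-hand side tends to $0$ as $r\to\infty$, while the left-hand side increases to $\int_M(f\circ\varphi)|d\varphi|^2\,v^g$ by monotone convergence. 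Hence $\int_M(f\circ\varphi)|d\varphi|^2\,v^g=0$, and since $f>0$ everywhere this forces $|d\varphi|\equiv0$; thus $\varphi$ is constant.

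I expect the main obstacle to be the absorption step: the exponents in the finiteness hypothesis (\ref{eq3.5}) are precisely engineered so that splitting the gradient term with the weight $\sqrt{f\circ\varphi}$ matches the positive term $\psi_r^2(f\circ\varphi)|d\varphi|^2$ on the left. Getting this pairing right --- rather than, say, trying to control $\int_M|\omega|\,v^g$ directly and apply a Gaffney-type vanishing theorem --- is what makes the decay factor $O(1/r^2)$ appear and lets the boundary (annulus) contribution be killed by the convergence of $\int_M (f\circ\varphi)^{-1}|\xi\circ\varphi|^2\,v^g$, with completeness supplying the exhaustion by relatively compact balls on which the divergence theorem is legitimate.
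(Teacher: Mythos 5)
Your proposal is correct and follows essentially the same route as the paper's proof: the paper also works with the compactly supported form $\omega(X)=h(\xi\circ\varphi,\rho^2 d\varphi(X))$ (your $\psi_r^2\omega$), applies Young's inequality with exactly the weight $\lambda=(f\circ\varphi)/2$ to absorb the gradient term, and uses the standard cutoff with $|\operatorname{grad}\rho|\le 2/R$ before letting $R\to\infty$. The only differences are cosmetic (where the cutoff is inserted and whether the absorption is done pointwise or after integration).
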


\begin{proof}
Let $\rho$ be a smooth function with compact support on $M$, we set:
\begin{equation}\label{eq3.6}
    \omega(X)=h\big(\xi\circ\varphi,\rho^2d\varphi(X)\big),\quad X\in\Gamma(TM).
\end{equation}
Let $\{e_i\}$ be a normal orthonormal frame at $x\in M$, we have:
\begin{equation}\label{eq3.7}
    \operatorname{div}^M\omega
   = e_i\big[h\big(\xi\circ\varphi,\rho^2d\varphi(e_i)\big)\big],
\end{equation}
by equation (\ref{eq3.7}), and the harmonicity condition of $\varphi$, we get:
\begin{eqnarray}\label{eq3.8}
\operatorname{div}^M\omega
   &=& \nonumber h\big(\nabla^\varphi _{e_i} (\xi\circ\varphi),\rho^2d\varphi(e_i)\big)
    +h\big( \xi\circ\varphi,\nabla^\varphi _{e_i}\rho^2d\varphi(e_i)\big)\\
    &=&\rho^2h\big(\nabla^\varphi _{e_i} (\xi\circ\varphi),d\varphi(e_i)\big)
    +2\rho e_i(\rho)h\big(\xi\circ\varphi,d\varphi(e_i)\big),
\end{eqnarray}
since $\xi$ is a conformal vector field with potential function $f$,
we find that:
\begin{equation}\label{eq3.9}
  \rho^2h\big(\nabla^\varphi _{e_i} (\xi\circ\varphi),d\varphi(e_i)\big)
  =(f\circ\varphi)\rho^2  h\big(d\varphi(e_i) ,d\varphi(e_i)\big),
\end{equation}
by  Young's inequality we have:
\begin{equation}\label{eq3.10}
    -2\rho e_i(\rho)h\big(\xi\circ\varphi,d\varphi(e_i)\big)
    \leq \lambda \rho^2|d\varphi|^2+\frac{1}{\lambda}e_i(\rho)^2|\xi\circ\varphi|^2,
\end{equation}
for all function $\lambda>0$ on $M$, because of the inequality: $$|\sqrt{\lambda}\rho d\varphi(e_i)+\frac{1}{\sqrt{\lambda}}e_i(\rho)(\xi\circ\varphi)|^2\geq0.$$
From (\ref{eq3.8}), (\ref{eq3.9}) and (\ref{eq3.10}) we deduce the inequality:
\begin{equation}\label{eq3.11}
    (f\circ\varphi)\rho^2|d\varphi|^2-\operatorname{div}^M\omega
    \leq \lambda \rho^2|d\varphi|^2+\frac{1}{\lambda}e_i(\rho)^2|\xi\circ\varphi|^2,
\end{equation}
let $\lambda=(f\circ\varphi)/2$, by (\ref{eq3.11}) we have:
\begin{equation}\label{eq3.12}
    \frac{1}{2}(f\circ\varphi)\rho^2|d\varphi|^2-\operatorname{div}^M\omega
    \leq \frac{2}{f\circ\varphi}e_i(\rho)^2|\xi\circ\varphi|^2,
\end{equation}
by the divergence theorem, and (\ref{eq3.12}) we have:
\begin{equation}\label{eq3.13}
   \frac{1}{2} \int_M (f\circ\varphi)\rho^2|d\varphi|^2 v^g
    \leq 2\int_M e_i(\rho)^2\frac{|\xi\circ\varphi|^2}{f\circ\varphi} v^g.
\end{equation}
Consider the smooth function $\rho=\rho_R$ such that, $\rho\leq1$ on $M$, $\rho=1$ on the ball $B(p,R)$, $\rho=0$ on $M\backslash B(p,2R)$
and $|\operatorname{grad}^M \rho|\leq\frac{2}{R}$ (see \cite{pa}). From (\ref{eq3.13}) we get:
\begin{equation}\label{eq3.14}
    \frac{1}{2}\int_M(f\circ\varphi)\rho^2|d\varphi|^2 v^g
    \leq \frac{8}{R^2}\int_M\frac{|\xi\circ\varphi|^2}{f\circ\varphi} v^g,
\end{equation}
since $ \int_M \frac{|\xi\circ\varphi|^2}{f\circ\varphi} v^g<\infty$, when $R\rightarrow\infty$, we obtain:
\begin{equation}\label{eq3.15}
     \int_M(f\circ\varphi)|d\varphi|^2 v^g=0.
\end{equation}
Consequently, $|d\varphi|=0$, that is  $\varphi$ is constant.
\end{proof}
From Theorem  \ref{theorem3}, we get the following:
\begin{corollary}
Let $(M,g)$ be a complete non-compact Riemannian manifold and let $\xi$  a conformal vector field on $(M,g)$ with potential function $f>0$ at any point. Then: $$\int_M \frac{|\xi|^2}{f} v^g=\infty.$$
\end{corollary}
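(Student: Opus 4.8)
The plan is to apply Theorem \ref{theorem3} to the identity map and argue by contradiction. First I would take the target to be $(N,h)=(M,g)$ itself, equipped with the very same conformal vector field $\xi$ and positive potential function $f$, and let the harmonic map be the identity $\varphi=\operatorname{id}_M:(M,g)\to(M,g)$. The identity map is always harmonic: since $d\varphi=\operatorname{id}_{TM}$ and the pull-back connection $\nabla^\varphi$ coincides with $\nabla^M$, formula (\ref{eq1.9}) gives $\tau(\operatorname{id})=\nabla^M_{e_i}e_i-\nabla^M_{e_i}e_i=0$.

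Next I would observe that under this choice $\xi\circ\varphi=\xi$ and $f\circ\varphi=f$, so the integrability hypothesis (\ref{eq3.5}) appearing in Theorem \ref{theorem3} reads exactly $\int_M \frac{|\xi|^2}{f}\,v^g<\infty$, which is the quantity in question.

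The argument then runs by contradiction. Suppose $\int_M \frac{|\xi|^2}{f}\,v^g<\infty$. Then all the hypotheses of Theorem \ref{theorem3} are met, and its conclusion forces $\varphi=\operatorname{id}_M$ to be a constant map. But $M$ is non-compact, hence it contains at least two distinct points, so the identity map is certainly not constant. This contradiction shows that the assumed finiteness is impossible, and therefore $\int_M \frac{|\xi|^2}{f}\,v^g=\infty$.

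There is no genuine analytic obstacle here; the proof is immediate once one recognizes the identity map as the right test object. The only points requiring a moment's care are verifying that the identity indeed qualifies as a harmonic map into a target carrying the same conformal field with positive potential, and invoking non-compactness (so that $M$ is not reduced to a single point) precisely to rule out constancy.
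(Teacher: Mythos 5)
Your proposal is correct and is exactly the argument the paper intends: the corollary is deduced from Theorem \ref{theorem3} by taking $(N,h)=(M,g)$ and $\varphi=\operatorname{id}_M$ (the same device the paper makes explicit later for the Ricci-soliton corollary via Theorem \ref{theorem6}), so that finiteness of $\int_M \frac{|\xi|^2}{f}\,v^g$ would force the identity to be constant, contradicting non-compactness. Nothing is missing.
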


\subsection{Bi-harmonic maps and conformal vector fields}
\begin{theorem}\label{theorem4}
Let $(M,g)$ be a compact orientable  Riemannian manifold without boundary,
and let $\xi$  a conformal vector field with non-constant potential function $f$ on a Riemannian manifold $(N,h)$ such
that $\operatorname{grad}^N f $ is parallel. Then,
any bi-harmonic map $\varphi$ from $(M,g)$ to $(N,h)$ is constant.
\end{theorem}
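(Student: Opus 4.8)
The plan is to exploit the hypothesis that $\operatorname{grad}^N f$ is parallel in order to first force $f\circ\varphi$ to be constant, and then to feed the conformal vector field $\xi$ into a divergence identity of the same flavour as the one in Proposition~\ref{proposition3.1}. Throughout, write $W=\operatorname{grad}^N f$. The parallelism $\nabla^N W=0$ has three consequences I will use repeatedly: $\operatorname{Hess}^N f=0$; the curvature relation $R^N(\cdot,\cdot)W=0$ (so $R^N$ annihilates $W$ in every slot); and the fact that the section $W\circ\varphi$ of $\varphi^{-1}TN$ is parallel, i.e. $\nabla^\varphi_X(W\circ\varphi)=\nabla^N_{d\varphi(X)}W=0$.

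First I would show that $u:=f\circ\varphi$ is constant. Taking the one-form $X\mapsto h(W\circ\varphi,d\varphi(X))=X(u)$ and computing its divergence in a normal frame $\{e_i\}$, the parallelism of $W\circ\varphi$ together with $\sum_i\nabla^\varphi_{e_i}d\varphi(e_i)=\tau(\varphi)$ yields $\Delta^M u=h\big(W\circ\varphi,\tau(\varphi)\big)$, where $\Delta^M=\operatorname{div}^M\operatorname{grad}^M$. Differentiating once more: the one-form $X\mapsto h\big(W\circ\varphi,\nabla^\varphi_X\tau(\varphi)\big)$ has divergence $h\big(W\circ\varphi,\sum_i\nabla^\varphi_{e_i}\nabla^\varphi_{e_i}\tau(\varphi)\big)$, and by the bi-harmonic equation $J_\varphi(\tau(\varphi))=0$ read off from \eqref{eq1.11} this equals $-h\big(W\circ\varphi,R^N(\tau(\varphi),d\varphi(e_i))d\varphi(e_i)\big)$, which vanishes since $R^N$ annihilates $W$. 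Hence $\Delta^M(\Delta^M u)\equiv0$ on the compact $M$, so $\int_M(\Delta^M u)^2\,v^g=\int_M u\,\Delta^M(\Delta^M u)\,v^g=0$, forcing $\Delta^M u\equiv0$; a harmonic function on a compact manifold being constant, $u=f\circ\varphi\equiv c$. In particular the two orthogonality relations $h(W\circ\varphi,d\varphi(X))=0$ and $h(W\circ\varphi,\tau(\varphi))=0$ hold.

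Next I would test bi-harmonicity against $\xi\circ\varphi$. Since $\int_M h\big(J_\varphi(\tau(\varphi)),\xi\circ\varphi\big)\,v^g=0$, and since the rough Laplacian $\sum_i(\nabla^\varphi_{e_i}\nabla^\varphi_{e_i}-\nabla^\varphi_{\nabla^M_{e_i}e_i})$ is self-adjoint on $\Gamma(\varphi^{-1}TN)$, I expand this integral using $\nabla^\varphi_X(\xi\circ\varphi)=(f\circ\varphi)d\varphi(X)+A(d\varphi(X))$, where $A$ is the skew-symmetric part of $\nabla^N\xi$ coming from the conformal condition $\mathcal{L}_\xi h=2fh$ (so $h(AX,Y)+h(AY,X)=0$). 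The curvature contribution is rewritten via the standard conformal identity $R^N(X,Y)\xi=(Xf)Y-(Yf)X+(\nabla^N_X A)Y-(\nabla^N_Y A)X$. The two occurrences of $\nabla^N A$ — one from the rough Laplacian of $\xi\circ\varphi$, the other from the curvature term — cancel; the terms $(Xf)Y-(Yf)X$ are killed by the orthogonality relations of the previous step; and the skew tensor $A$ drops out of every diagonal trace. What survives is $c\int_M|\tau(\varphi)|^2\,v^g=0$.

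Finally, if $c\neq0$ this gives $\tau(\varphi)=0$, so $\varphi$ is harmonic; the computation in the proof of Proposition~\ref{proposition3.1}, now retaining the tension term, reads $\int_M(f\circ\varphi)|d\varphi|^2\,v^g+\int_M h(\xi\circ\varphi,\tau(\varphi))\,v^g=0$, which collapses to $c\int_M|d\varphi|^2\,v^g=0$ and forces $d\varphi=0$. I expect the main obstacle to be twofold: organizing the cancellation of the $\nabla^N A$ and curvature terms in the $\xi$-tested identity using nothing beyond the orthogonality relations, and the degenerate case $c=0$. In that case $\varphi$ maps into the level set $f^{-1}(0)$, which is totally geodesic because its unit normal $W/|W|$ is parallel, and on which $\xi$ restricts to a Killing field; since harmonic (let alone bi-harmonic) maps into a manifold with a Killing field need not be constant, this borderline situation must be excluded or treated separately, and I regard it as the delicate point of the argument.
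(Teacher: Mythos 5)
Your first two steps are sound and verifiable: the parallelism of $W=\operatorname{grad}^Nf$ does give $\Delta^M(f\circ\varphi)=h(W\circ\varphi,\tau(\varphi))$ and $\Delta^M\Delta^M(f\circ\varphi)=0$ (the curvature term dies because $R^N(\cdot,\cdot)W=0$), hence $f\circ\varphi\equiv c$ and the two orthogonality relations; and testing $J_\varphi(\tau(\varphi))=0$ against $\xi\circ\varphi$ does collapse to $c\int_M|\tau(\varphi)|^2v^g=0$, since $h((\nabla^N_eA)\tau,e)=-h(\tau,(\nabla^N_eA)e)$ by skew-symmetry of $\nabla^N_eA$, which is exactly the cancellation you describe. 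This is a genuinely different route from the paper's. But the proof is incomplete, and you have flagged the hole yourself: the case $c=0$. This case cannot be "excluded" — it is a legitimate configuration allowed by the hypotheses whenever $f^{-1}(0)\neq\emptyset$ (e.g.\ $N=\mathbb{R}^2$ flat, $\xi=a\bigl(\tfrac{t^2-s^2}{2}\partial_t+ts\,\partial_s\bigr)$, $f=at$), and when it occurs every identity you derived becomes vacuous: all $\xi$-terms degenerate through the orthogonality relations, and the level hypersurface $f^{-1}(0)$, as you note, visibly inherits only a Killing field from $\xi$, which is not enough to force constancy. A proof that terminates with "this borderline situation must be excluded or treated separately" has not proved the theorem.

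The missing idea — which is the paper's entire proof — is to differentiate $\xi$ in the direction of $W$ rather than test against $\xi$ itself: set $\zeta=[\operatorname{grad}^Nf,\xi]=\nabla^N_W\xi$ (using $\nabla^NW=0$). The Kühnel--Rademacher identity for conformal fields, $\nabla^2_{U,V}\xi+R^N(\xi,U)V=(Uf)V+(Vf)U-h(U,V)W$, combined with $R^N(\cdot,\cdot)W=0$ and $(Uf)W=h(U,W)W$, gives $\nabla^N_U\zeta=|W|^2U$ for every $U$; since $f$ is non-constant and $|W|$ is constant, $\zeta$ is a \emph{proper homothetic} vector field on $N$, and Lemma~\ref{lemma4} immediately yields constancy of every bi-harmonic map, with no case analysis at all. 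The same construction also repairs your $c=0$ branch if you insist on your route: along $f^{-1}(0)$ one has $h(\zeta,W)=h(\nabla^N_W\xi,W)=f|W|^2=0$, so $\zeta$ is tangent to that (totally geodesic) level set and restricts there to a proper homothetic field, so the level set carries much more than the Killing field $\xi^T$ — but extracting this is precisely the commutator idea your argument lacks.
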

For the proof of Theorem \ref{theorem4}, we need the following lemma.

\begin{lemma}\cite{cherif}\label{lemma4}
Let $(M,g)$ be a compact orientable  Riemannian manifold without boundary
and $(N,h)$  a Riemannian manifold admitting a proper homothetic vector field $\zeta$, i.e. $\mathcal{L}_{\zeta}h=2kh$ with $k\in\mathbb{R}^*$. Then, any bi-harmonic map $\varphi$ from $(M,g)$ to $(N,h)$ is constant.
\end{lemma}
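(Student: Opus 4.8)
The plan is to prove Lemma~\ref{lemma4} by constructing a $1$-form on $M$ whose divergence, after integration over the closed manifold, forces the energy density of $\varphi$ to vanish. The natural candidate is $\omega(X) = h\big(\zeta\circ\varphi, d\varphi(X)\big)$, exactly as in the proof of Proposition~\ref{proposition3.1}. Since $\zeta$ is homothetic with $\mathcal{L}_\zeta h = 2kh$, the defining relation $h(\nabla^N_Y \zeta, Z) + h(\nabla^N_Z \zeta, Y) = 2k\,h(Y,Z)$ holds; pulling this back along $\varphi$ gives $h\big(\nabla^\varphi_{e_i}(\zeta\circ\varphi), d\varphi(e_i)\big) = k\,|d\varphi|^2$. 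Here I would work in a normal orthonormal frame $\{e_i\}$ at a point, so that $\nabla^M_{e_i}e_i = 0$.

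First I would compute $\operatorname{div}^M\omega = e_i\big[h(\zeta\circ\varphi, d\varphi(e_i))\big]$, expand it by metric compatibility into $h\big(\nabla^\varphi_{e_i}(\zeta\circ\varphi), d\varphi(e_i)\big) + h\big(\zeta\circ\varphi, \nabla^\varphi_{e_i}d\varphi(e_i)\big)$, and observe that the second summand equals $h(\zeta\circ\varphi, \tau(\varphi))$ in the normal frame. The crucial point distinguishing this from the harmonic case is that here $\varphi$ is \emph{bi-harmonic}, not harmonic, so $\tau(\varphi)$ need not vanish and the term $h(\zeta\circ\varphi,\tau(\varphi))$ survives. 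Using the homothety identity on the first summand yields
\begin{equation}\label{eq-lem-div}
\operatorname{div}^M\omega = k\,|d\varphi|^2 + h\big(\zeta\circ\varphi, \tau(\varphi)\big).
\end{equation}
Integrating over the closed manifold $M$ and applying the divergence theorem kills the left-hand side, so $k\int_M |d\varphi|^2\, v^g = -\int_M h(\zeta\circ\varphi, \tau(\varphi))\,v^g$.

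The main obstacle, and the heart of the argument, is to show that $\int_M h(\zeta\circ\varphi, \tau(\varphi))\,v^g = 0$, so that $k\int_M|d\varphi|^2\,v^g = 0$ and hence $|d\varphi|\equiv 0$ since $k\neq 0$. To handle this term I would exploit bi-harmonicity: introduce a second $1$-form, the natural choice being $\sigma(X) = h\big(\zeta\circ\varphi, \nabla^\varphi_X \tau(\varphi)\big) - h\big(\nabla^\varphi_X(\zeta\circ\varphi), \tau(\varphi)\big)$, and compute $\operatorname{div}^M\sigma$. Differentiating, the homothety relation $\nabla^\varphi_{e_i}(\zeta\circ\varphi)$ again simplifies cleanly, while the derivatives of $\tau(\varphi)$ assemble into the rough Laplacian and curvature terms appearing in $J_\varphi$. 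Since $\varphi$ is bi-harmonic, $J_\varphi(\tau(\varphi)) = 0$, which is precisely the combination $-\operatorname{trace}(\nabla^\varphi)^2\tau(\varphi) - \operatorname{trace}R^N(\tau(\varphi),d\varphi)d\varphi = 0$; I expect the curvature contributions to cancel by the symmetries of $R^N$ when paired against $\zeta\circ\varphi$ through the homothety, leaving $\operatorname{div}^M\sigma = k\,h(\tau(\varphi),\tau(\varphi)) = k\,|\tau(\varphi)|^2$ up to an exact term. Integrating forces $\tau(\varphi) \equiv 0$, reducing $\varphi$ to a harmonic map; the result then follows immediately from Proposition~\ref{proposition3.1} (a homothetic field with $k\neq 0$ has constant potential $f=k$, and the remark notes this case), giving $\varphi$ constant. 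The delicate bookkeeping is tracking exactly which curvature and second-derivative terms survive the pairing, and verifying that the homothety condition makes the pullback of $\nabla^N\zeta$ act as the scalar $k$ uniformly.
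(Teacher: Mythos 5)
Your proposal is correct and follows essentially the same route as the paper's own technique (the argument it applies verbatim in the proof of Theorem~\ref{theorem7}, and the one used in the cited reference for Lemma~\ref{lemma4}): a first divergence identity $\operatorname{div}^M\omega=k|d\varphi|^2+h(\zeta\circ\varphi,\tau(\varphi))$ reduces the problem to proving $\tau(\varphi)=0$, and a second $1$-form built from $h(\zeta\circ\varphi,\nabla^\varphi_{\cdot}\tau(\varphi))$ and $h(\nabla^\varphi_{\cdot}(\zeta\circ\varphi),\tau(\varphi))$, combined with bi-harmonicity and the curvature symmetry, gives $\operatorname{div}^M\sigma=-k|\tau(\varphi)|^2$, which forces $\tau(\varphi)=0$ after integration and then constancy via the harmonic case. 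The one step you should state explicitly is that the cancellation of curvature terms is not a consequence of the symmetries of $R^N$ alone: it requires the second-order identity $\nabla^N_U\nabla^N_V\zeta-\nabla^N_{\nabla^N_U V}\zeta=-R^N(\zeta,U)V$, valid for homothetic (hence affine) vector fields, which is precisely the Jacobi-type property that the paper invokes in the analogous step of Theorem~\ref{theorem7}.
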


\begin{proof}[Proof of Theorem \ref{theorem4}]
We set $\zeta=[\textrm{grad}^N\,f , \xi]$, since $\operatorname{grad}^Nf$ is parallel on $(N,h)$, then $\zeta$ is an homothetic vector field satisfying $\nabla^N _U \zeta=|\textrm{grad}^N\,f|^2 U$ for any $U\in\Gamma(TN)$ (see \cite{WH}). The Theorem \ref{theorem4} follows from Lemma \ref{lemma4}.
\end{proof}

From Theorem \ref{theorem4}, we deduce:

\begin{corollary}
Let $(M,g)$ be a compact orientable  Riemannian manifold without boundary,
and let $\xi$  a conformal vector field with non-constant potential function $f$ on $(M,g)$. Then,
$\operatorname{grad} f $ is not parallel.
\end{corollary}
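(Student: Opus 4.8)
The plan is to argue by contradiction and to reduce the statement directly to Theorem \ref{theorem4} by taking the target manifold to be $(M,g)$ itself. Suppose, contrary to the claim, that $\operatorname{grad} f$ is parallel on $(M,g)$. Then the data $\big((M,g),\xi,f\big)$ satisfy \emph{exactly} the hypotheses imposed on the target in Theorem \ref{theorem4}: the vector field $\xi$ is conformal with non-constant potential function $f$, and $\operatorname{grad} f$ is parallel. Hence I may invoke Theorem \ref{theorem4} with $(N,h)=(M,g)$, the source being the very same compact, orientable manifold without boundary.

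With this choice, Theorem \ref{theorem4} asserts that every bi-harmonic map $\varphi\colon(M,g)\to(M,g)$ must be constant. The crux is then to exhibit a single non-constant bi-harmonic self-map in order to derive a contradiction, and the natural candidate is the identity map $\operatorname{id}_M$.

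To close the argument I would record two elementary facts about $\operatorname{id}_M$. First, it is harmonic: since $d(\operatorname{id}_M)$ is the identity endomorphism on each tangent space, its second fundamental form $\nabla d(\operatorname{id}_M)$ vanishes, so by \eqref{eq1.9} we have $\tau(\operatorname{id}_M)=0$. Second, any harmonic map is automatically bi-harmonic, because $\tau_2(\varphi)=J_\varphi(\tau(\varphi))$ and $J_\varphi$ is linear by \eqref{eq1.11}, whence $\tau(\varphi)=0$ forces $\tau_2(\varphi)=0$. Thus $\operatorname{id}_M$ is a bi-harmonic map, and it is manifestly non-constant since $\dim M\geq 1$. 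This contradicts the conclusion just obtained from Theorem \ref{theorem4}, and therefore $\operatorname{grad} f$ cannot be parallel.

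I do not anticipate a genuine obstacle here: the whole content of the corollary is the observation that it is the contrapositive specialization of Theorem \ref{theorem4} to self-maps, with the identity playing the role of the forbidden non-constant bi-harmonic map. The only point requiring a moment's care is checking that $\operatorname{id}_M$ meets \emph{both} the harmonicity and the bi-harmonicity requirements, which is immediate from the definitions recalled in \eqref{eq1.9} and \eqref{eq1.11}.
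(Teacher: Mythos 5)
Your proof is correct and is exactly the deduction the paper intends: specialize Theorem \ref{theorem4} to $(N,h)=(M,g)$ and contradict its conclusion with the identity map, which is harmonic and hence bi-harmonic (since $J_\varphi$ is linear) yet non-constant. The paper leaves this step implicit (``From Theorem \ref{theorem4}, we deduce''), and your write-up supplies precisely the missing details in the same spirit.
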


\subsection{ Harmonic Maps to Ricci Solitons}
\begin{proposition}\label{proposition6.1}
Let $(M,g)$ be a compact orientable  Riemannian manifold without boundary,
and $(N,h,\xi,\lambda)$  a nontrivial Ricci soliton with:
 \begin{center}
 $\operatorname{Ric}^N>\lambda h$\quad or \quad$\operatorname{Ric}^N<\lambda h$.
 \end{center}
Then any harmonic map $\varphi$ from $(M,g)$ to $(N,h)$ is constant.
\end{proposition}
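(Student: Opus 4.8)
The plan is to imitate the proof of Proposition \ref{proposition3.1}, replacing the conformal condition by the soliton equation (\ref{eq1.6}). Assume first that $\operatorname{Ric}^N>\lambda h$ everywhere. For $X\in\Gamma(TM)$ I would set $\omega(X)=h(\xi\circ\varphi,d\varphi(X))$ and, at a point $x\in M$ with $\{e_i\}$ a normal orthonormal frame, compute $\operatorname{div}^M\omega=e_i[h(\xi\circ\varphi,d\varphi(e_i))]$. Expanding the derivative with the pull-back connection produces two terms; the term involving $\nabla^\varphi_{e_i}d\varphi(e_i)=\tau(\varphi)$ vanishes since $\varphi$ is harmonic, leaving $\operatorname{div}^M\omega=h(\nabla^\varphi_{e_i}(\xi\circ\varphi),d\varphi(e_i))$, exactly as in (\ref{eq3.3}).

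The next step is to convert this expression using the soliton structure. Using $\nabla^\varphi_{e_i}(\xi\circ\varphi)=\nabla^N_{d\varphi(e_i)}\xi$ and the symmetry of $h$, I would write $h(\nabla^N_{d\varphi(e_i)}\xi,d\varphi(e_i))=\tfrac12(\mathcal{L}_\xi h)(d\varphi(e_i),d\varphi(e_i))$. The soliton equation (\ref{eq1.6}) then reads $\tfrac12\mathcal{L}_\xi h=\lambda h-\operatorname{Ric}^N$, so that
$$\operatorname{div}^M\omega=\sum_i(\lambda h-\operatorname{Ric}^N)(d\varphi(e_i),d\varphi(e_i)).$$
Under the hypothesis $\operatorname{Ric}^N>\lambda h$, each summand is nonpositive, and the whole sum vanishes at $x$ only if $d\varphi(e_i)=0$ for every $i$, that is, only where $d\varphi=0$.

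Finally I would integrate over the compact manifold $M$ without boundary. By the divergence theorem $\int_M\operatorname{div}^M\omega\,v^g=0$, while the pointwise inequality forces the integrand to be nonpositive; hence $\operatorname{div}^M\omega\equiv0$, and by the strictness of $\operatorname{Ric}^N>\lambda h$ this gives $d\varphi\equiv0$, so $\varphi$ is constant. The case $\operatorname{Ric}^N<\lambda h$ is identical after reversing the inequality (the integrand is then nonnegative).

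I do not expect a serious obstacle here, as the argument is a direct adaptation of Proposition \ref{proposition3.1}. The one point requiring care is the middle identity, namely rewriting $h(\nabla^N_U\xi,U)$ as half the Lie derivative $(\mathcal{L}_\xi h)(U,U)$ and then invoking the soliton equation to produce a term of definite sign; once this is in place the conclusion is immediate from the divergence theorem. Note also that the hypotheses $\operatorname{Ric}^N\gtrless\lambda h$ automatically guarantee the soliton is nontrivial, in agreement with the statement.
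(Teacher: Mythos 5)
Your proof is correct and takes essentially the same route as the paper's own: the same $1$-form $\omega(X)=h(\xi\circ\varphi,d\varphi(X))$, the same identity $h(\nabla^N_U\xi,U)=\tfrac{1}{2}(\mathcal{L}_\xi h)(U,U)$ combined with the soliton equation, and the same conclusion via the divergence theorem and strictness of the inequality. One incidental remark at the end is inaccurate—$\operatorname{Ric}^N\gtrless\lambda h$ does not by itself force the soliton to be nontrivial, since an Einstein metric $\operatorname{Ric}^N=ch$ with constant $c\neq\lambda$ satisfies one of the strict inequalities—but this is immaterial, as nontriviality is never used in the argument.
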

\begin{proof}
Let $X\in\Gamma(TM)$, we set:
\begin{equation}\label{eq6.1}
    \omega(X)=h\big(\xi\circ\varphi,d\varphi(X)\big),
\end{equation}
let $\{e_i\}$ be a normal orthonormal frame at $x\in M$, we have:
\begin{equation}\label{eq6.2}
    \operatorname{div}^M\omega
   = e_i\big[h\big(\xi\circ\varphi,d\varphi(e_i)\big)\big],
\end{equation}
by equation (\ref{eq6.2}), and the harmonicity condition of $\varphi$, we get:
\begin{eqnarray}\label{eq6.3}
\operatorname{div}^M\omega
   &=& h\big(\nabla^\varphi _{e_i} (\xi\circ\varphi),d\varphi(e_i)\big)
   = \frac{1}{2}(\mathcal{L}_\xi h)(d\varphi(e_i) ,d\varphi(e_i)),
\end{eqnarray}
from the soliton equation,
we find that:
\begin{equation}\label{eq6.4}
  \operatorname{div}^M\omega
  =\lambda  h\big(d\varphi(e_i) ,d\varphi(e_i)\big)-\operatorname{Ric}^N(d\varphi(e_i) ,d\varphi(e_i))
\end{equation}
the Proposition \ref{proposition6.1} follows from equation (\ref{eq6.4}), and the divergence theorem.
\end{proof}

\begin{remark}\label{remark3}
 The condition $\operatorname{Ric}^N>\lambda h$ (resp. $\operatorname{Ric}^N<\lambda h$) is equivalent to
$\operatorname{Ric}^N(v,v)>\lambda h(v,v)$ (resp. $\operatorname{Ric}^N(v,v)<\lambda h(v,v)$),
for any $v\in T_pN-\{0\}$, where $p\in N$.
\end{remark}

It is known that the cigar soliton:  $$(\mathbb{R}^2,\frac{dx^2+dy^2}{1+x^2+y^2}),$$ is steady with strictly positive Ricci tensor (see \cite{H1}),
according to Proposition \ref{proposition6.1}, we have the following:
\begin{corollary}
Any harmonic map $\varphi$ from a compact orientable  Riemannian manifold without boundary to the cigar soliton
is constant.
\end{corollary}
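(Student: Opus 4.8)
The plan is to obtain this statement as a direct instance of Proposition~\ref{proposition6.1}, so the real work reduces to verifying that the cigar soliton satisfies every hypothesis of that proposition. First I would recall the standard facts recorded just above: the cigar carries a (gradient) Ricci soliton structure $(\mathbb{R}^2,h,\xi,\lambda)$ which is \emph{steady}, meaning that the soliton constant is $\lambda=0$. Under this normalization the defining inequality of Proposition~\ref{proposition6.1} specializes cleanly: the condition $\operatorname{Ric}^N>\lambda h$ becomes simply $\operatorname{Ric}^N>0$.

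Next I would invoke the cited computation (see \cite{H1}) that the Ricci tensor of the cigar metric $h=(dx^2+dy^2)/(1+x^2+y^2)$ is strictly positive at every point; in the language of Remark~\ref{remark3} this reads $\operatorname{Ric}^N(v,v)>0=\lambda h(v,v)$ for every nonzero $v\in T_p\mathbb{R}^2$. I would also observe that the soliton is nontrivial: since the Gaussian (equivalently scalar, equivalently Ricci) curvature of the cigar is non-constant and decays at infinity, the metric is not Einstein, which is exactly the nontriviality required by Proposition~\ref{proposition6.1}. The soliton vector field $\xi$ attached to the cigar is the one that enters the one-form $\omega$ in the proof of that proposition, so no additional data needs to be supplied.

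With these two observations in hand, all hypotheses of Proposition~\ref{proposition6.1} are verified with $(N,h)$ taken to be the cigar soliton, and the conclusion that every harmonic map from a compact orientable boundaryless $(M,g)$ into the cigar is constant follows immediately. I do not expect a genuine obstacle here, since the entire analytic content is already packaged inside Proposition~\ref{proposition6.1} via the divergence theorem; the only point demanding care is confirming the steady normalization $\lambda=0$ and the strict sign of the Ricci curvature \emph{simultaneously}, so that the combined strict inequality $\operatorname{Ric}^N>\lambda h$ genuinely holds rather than merely a non-strict version that would fail to force $|d\varphi|\equiv 0$.
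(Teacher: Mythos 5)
Your proposal is correct and matches the paper's own route exactly: the paper derives this corollary by noting that the cigar soliton is steady ($\lambda=0$) with strictly positive Ricci tensor, so $\operatorname{Ric}^N>\lambda h$ and Proposition~\ref{proposition6.1} applies directly. Your additional check that the soliton is nontrivial (non-constant curvature, hence non-Einstein) is a useful fleshing-out of a hypothesis the paper leaves implicit, but the argument is the same.
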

In the case of non-compact Riemannian manifold, we obtain the following results:

\begin{theorem}\label{theorem6}
Let $(M,g)$ be a complete non-compact Riemannian manifold, and $(N,h,\xi,\lambda)$  a nontrivial  Ricci soliton with
$\operatorname{Ric}^N<\mu h$,
for some constant $\mu<\lambda$. If $\varphi:(M,g)\longrightarrow(N,h)$ is harmonic
map, satisfying:
\begin{equation}\label{eq6.5}
    \int_M |\xi\circ\varphi|^2 v^g<\infty,
\end{equation}
then $\varphi$ is constant.
\end{theorem}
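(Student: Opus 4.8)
The plan is to adapt the cutoff-function argument from Theorem \ref{theorem3} to the Ricci soliton setting, using the same one-form and the same family of compactly supported functions $\rho_R$. Let $\rho$ be a smooth function with compact support on $M$ and set $\omega(X)=h(\xi\circ\varphi,\rho^2 d\varphi(X))$ for $X\in\Gamma(TM)$, exactly as in \eqref{eq3.6}. Taking a normal orthonormal frame $\{e_i\}$ at a point and using the harmonicity of $\varphi$, the divergence of $\omega$ splits into two terms just as in \eqref{eq3.8}: one term carrying $\rho^2 h(\nabla^\varphi_{e_i}(\xi\circ\varphi),d\varphi(e_i))$ and a cross term $2\rho e_i(\rho)h(\xi\circ\varphi,d\varphi(e_i))$.

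The key step is to rewrite the first term using the soliton structure rather than the conformal condition. Since $h(\nabla^\varphi_{e_i}(\xi\circ\varphi),d\varphi(e_i))=\tfrac12(\mathcal{L}_\xi h)(d\varphi(e_i),d\varphi(e_i))$ (as established in \eqref{eq6.3}), the soliton equation \eqref{eq1.6} gives
\begin{equation}
\rho^2 h(\nabla^\varphi_{e_i}(\xi\circ\varphi),d\varphi(e_i))
=\rho^2\big(\lambda|d\varphi|^2-\operatorname{Ric}^N(d\varphi(e_i),d\varphi(e_i))\big).
\end{equation}
Using the hypothesis $\operatorname{Ric}^N<\mu h$, which by Remark \ref{remark3} means $\operatorname{Ric}^N(v,v)<\mu h(v,v)$ for every nonzero $v$, the right-hand side is bounded below by $(\lambda-\mu)\rho^2|d\varphi|^2$ with $\lambda-\mu>0$. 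For the cross term I would apply Young's inequality exactly as in \eqref{eq3.10}, obtaining $-2\rho e_i(\rho)h(\xi\circ\varphi,d\varphi(e_i))\le a\rho^2|d\varphi|^2+\tfrac1a e_i(\rho)^2|\xi\circ\varphi|^2$ for any constant $a>0$. Choosing $a=(\lambda-\mu)/2$ absorbs half the good term, leading after the divergence theorem to an estimate of the form
\begin{equation}
\frac{\lambda-\mu}{2}\int_M \rho^2|d\varphi|^2\,v^g
\le \frac{2}{\lambda-\mu}\int_M e_i(\rho)^2\,|\xi\circ\varphi|^2\,v^g.
\end{equation}

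Finally I would substitute the standard logarithmic cutoff $\rho=\rho_R$ with $\rho_R=1$ on $B(p,R)$, $\rho_R=0$ outside $B(p,2R)$, and $|\operatorname{grad}^M\rho_R|\le 2/R$, turning the right-hand side into $\tfrac{8}{(\lambda-\mu)R^2}\int_M|\xi\circ\varphi|^2\,v^g$. The finiteness assumption \eqref{eq6.5} then forces the right side to zero as $R\to\infty$, giving $\int_M|d\varphi|^2\,v^g=0$ and hence $\varphi$ constant. The main difference from Theorem \ref{theorem3} is that here the coefficient multiplying $|d\varphi|^2$ is the constant $\lambda-\mu$ rather than the positive function $f\circ\varphi$, so the integrability condition simplifies to $\int_M|\xi\circ\varphi|^2\,v^g<\infty$ without dividing by a potential. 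The only point requiring care is verifying that the strict-inequality hypothesis $\mu<\lambda$ is exactly what makes the Ricci term dominate and produces a strictly positive constant to run the Young's-inequality absorption; I expect this sign bookkeeping to be the one place where the argument could go wrong if the inequality direction were reversed.
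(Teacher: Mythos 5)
Your proposal is correct and follows essentially the same route as the paper's proof: the same test one-form $\omega(X)=h\big(\xi\circ\varphi,\rho^2 d\varphi(X)\big)$, the soliton equation, Young's inequality for the cross term, and Yau's cutoff functions $\rho_R$ with the divergence theorem. The only cosmetic difference is that you apply the bound $\operatorname{Ric}^N<\mu h$ \emph{before} the absorption step and take the Young constant $(\lambda-\mu)/2$, so you end with $\int_M|d\varphi|^2 v^g=0$ directly, whereas the paper takes $\epsilon=\lambda-\mu$, keeps the Ricci term inside the integral, and invokes the pointwise positivity of $\mu h-\operatorname{Ric}^N$ only in the final step; both versions are valid and yield the same conclusion.
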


\begin{proof}
Let $\rho$ be a smooth function with compact support on $M$, we set:
\begin{equation}\label{eq6.6}
    \omega(X)=h\big(\xi\circ\varphi,\rho^2d\varphi(X)\big),\quad X\in\Gamma(TM).
\end{equation}
Let $\{e_i\}$ be a normal orthonormal frame at $x\in M$, we have:
\begin{equation}\label{eq6.7}
    \operatorname{div}^M\omega
   = e_i\big[h\big(\xi\circ\varphi,\rho^2d\varphi(e_i)\big)\big],
\end{equation}
by equation (\ref{eq6.7}), and the harmonicity condition of $\varphi$, we get:
\begin{eqnarray}\label{eq6.8}
\operatorname{div}^M\omega
   &=& \nonumber h\big(\nabla^\varphi _{e_i} (\xi\circ\varphi),\rho^2d\varphi(e_i)\big)
    +h\big( \xi\circ\varphi,\nabla^\varphi _{e_i}\rho^2d\varphi(e_i)\big)\\
    &=&\rho^2h\big(\nabla^\varphi _{e_i} (\xi\circ\varphi),d\varphi(e_i)\big)
    +2\rho e_i(\rho)h\big(\xi\circ\varphi,d\varphi(e_i)\big),
\end{eqnarray}
by the soliton equation, we find that:
\begin{eqnarray}\label{eq6.9}
  \rho^2h\big(\nabla^\varphi _{e_i} (\xi\circ\varphi),d\varphi(e_i)\big)
  &=&\nonumber\lambda\rho^2  h\big(d\varphi(e_i) ,d\varphi(e_i)\big)\\
  &&-\rho^2 \operatorname{Ric}^N\big(d\varphi(e_i) ,d\varphi(e_i)\big),
\end{eqnarray}
by  Young's inequality we have:
\begin{equation}\label{eq6.10}
    -2\rho e_i(\rho)h\big(\xi\circ\varphi,d\varphi(e_i)\big)
    \leq \epsilon \rho^2|d\varphi|^2+\frac{1}{\epsilon}e_i(\rho)^2|\xi\circ\varphi|^2,
\end{equation}
for all $\epsilon>0$. From (\ref{eq6.8}), (\ref{eq6.9}) and (\ref{eq6.10}) we deduce the inequality:
\begin{eqnarray}\label{eq6.11}
    \lambda\rho^2|d\varphi|^2-\rho^2 \operatorname{Ric}^N\big(d\varphi(e_i) ,d\varphi(e_i)\big)
&-&\nonumber\operatorname{div}^M\omega\\
    &\leq& \epsilon \rho^2|d\varphi|^2+\frac{1}{\epsilon}e_i(\rho)^2|\xi\circ\varphi|^2,
\end{eqnarray}
let $\epsilon=\lambda-\mu$, by (\ref{eq6.11}) we have:
\begin{eqnarray}\label{eq6.12}
   \rho^2\big[ \mu|d\varphi|^2- \operatorname{Ric}^N\big(d\varphi(e_i) ,d\varphi(e_i)\big)\big]
&-&\nonumber\operatorname{div}^M\omega\\
   & \leq& \frac{1}{\lambda-\mu}e_i(\rho)^2|\xi\circ\varphi|^2,
\end{eqnarray}
by the divergence theorem, and (\ref{eq6.12}) we have:
\begin{eqnarray}\label{eq6.13}
      \int_M\rho^2\big[ \mu|d\varphi|^2&-&\nonumber \operatorname{Ric}^N\big(d\varphi(e_i) ,d\varphi(e_i)\big)\big]v^g\\
    &\leq& \frac{1}{\lambda-\mu}\int_M e_i(\rho)^2|\xi\circ\varphi|^2 v^g.
\end{eqnarray}
Consider the smooth function $\rho=\rho_R$ such that, $\rho\leq1$ on $M$, $\rho=1$ on the ball $B(p,R)$, $\rho=0$ on $M\backslash B(p,2R)$
and $|\operatorname{grad}^M \rho|\leq\frac{2}{R}$ (see \cite{pa}). From (\ref{eq6.13}) we get:
\begin{eqnarray}\label{eq6.14}
    \int_M\rho^2\big[ \mu|d\varphi|^2&-&\nonumber \operatorname{Ric}^N\big(d\varphi(e_i) ,d\varphi(e_i)\big)\big]v^g\\
    &\leq& \frac{4}{(\lambda-\mu) R^2}\int_M|\xi\circ\varphi|^2 v^g,
\end{eqnarray}
since $ \int_M |\xi\circ\varphi|^2 v^g<\infty$, when $R\rightarrow\infty$, we obtain:
\begin{equation}\label{eq6.15}
     \int_M\big[ \mu |d\varphi|^2- \operatorname{Ric}^N\big(d\varphi(e_i) ,d\varphi(e_i)\big)\big]v^g=0.
\end{equation}
Consequently, $d\varphi(e_i)=0$, for all $i$ (because $\mu h-\operatorname{Ric}^N>0$),  that is  $\varphi$ is constant.
\end{proof}
If $M=N$ and $\varphi=Id_M$,
from Theorem \ref{theorem6}, we deduce:
\begin{corollary}
Let $(M,g,\xi,\lambda)$ be a complete non-compact nontrivial  Ricci soliton with
$\operatorname{Ric}<\mu h$
 for some constant $\mu<\lambda$.  Then: $$\int_M |\xi|^2 v^g=\infty.$$
\end{corollary}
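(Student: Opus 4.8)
The plan is to obtain this as a direct corollary of Theorem \ref{theorem6} by choosing $N=M$, $h=g$, and $\varphi=\operatorname{Id}_M$, and then arguing by contradiction. All of the analytic content has already been carried out in Theorem \ref{theorem6}; what remains is to check that the identity map satisfies its hypotheses and to exploit the trivial observation that the identity map cannot be constant.

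First I would observe that the identity map $\operatorname{Id}_M\colon(M,g)\to(M,g)$ is always harmonic. Indeed, since $d\,\operatorname{Id}_M$ is the identity endomorphism of each tangent space, its second fundamental form $\nabla d\,\operatorname{Id}_M$ vanishes identically, and therefore $\tau(\operatorname{Id}_M)=\operatorname{trace}\nabla d\,\operatorname{Id}_M=0$. Thus $\operatorname{Id}_M$ meets the harmonicity requirement of Theorem \ref{theorem6}, with no conditions imposed on the soliton beyond those already assumed.

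Next I would match the remaining hypotheses. Taking $N=M$ and $\varphi=\operatorname{Id}_M$ gives $\xi\circ\varphi=\xi$, while the soliton data $(M,g,\xi,\lambda)$ together with the pinching $\operatorname{Ric}<\mu h$ for some $\mu<\lambda$ is precisely what Theorem \ref{theorem6} demands of the target manifold. Suppose, toward a contradiction, that $\int_M|\xi|^2 v^g<\infty$. Then the finiteness condition $\int_M|\xi\circ\varphi|^2 v^g<\infty$ of Theorem \ref{theorem6} is satisfied, and the theorem forces $\varphi=\operatorname{Id}_M$ to be constant.

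The contradiction, and the only point that actually needs to be remarked upon, is that the identity map of a manifold of positive dimension is never constant; since $(M,g)$ is non-compact it is in particular not reduced to a single point. Hence the assumption $\int_M|\xi|^2 v^g<\infty$ is untenable, and we conclude $\int_M|\xi|^2 v^g=\infty$. I anticipate no genuine obstacle in this argument: it is simply the specialization of Theorem \ref{theorem6} to the identity map, read in contrapositive form.
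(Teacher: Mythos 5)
Your proposal is correct and is exactly the paper's argument: the paper derives this corollary by setting $M=N$ and $\varphi=\operatorname{Id}_M$ in Theorem \ref{theorem6}, reading the theorem in contrapositive form since the identity map is harmonic but not constant. Your write-up simply makes explicit the details (harmonicity of the identity, the contradiction) that the paper leaves unstated.
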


\subsection{ Bi-harmonic Maps to Ricci Solitons}

\begin{theorem}\label{theorem7}
Let $(M,g)$ be a compact orientable  Riemannian manifold without boundary,
and $(N,h,\xi,\lambda)$  a nontrivial Ricci soliton with:
\begin{center}
$\operatorname{Ric}^N>\lambda h$ \quad or\quad $\operatorname{Ric}^N<\lambda h$.
\end{center}
Suppose that $\xi$ is Jacobi-type vector field.
Then any bi-harmonic map $\varphi$ from $(M,g)$ to $(N,h)$ is constant.
\end{theorem}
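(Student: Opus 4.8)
The plan is to show first that a bi-harmonic map $\varphi$ under these hypotheses is automatically harmonic, and then to invoke Proposition \ref{proposition6.1} verbatim to conclude that $\varphi$ is constant. The bridge from bi-harmonicity to harmonicity is supplied by the Jacobi-type hypothesis on $\xi$, combined with the self-adjointness of the Jacobi operator $J_\varphi$ on the compact manifold $M$.

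The key computation is to evaluate $J_\varphi(\xi\circ\varphi)$, where $\xi\circ\varphi$ and $\tau(\varphi)$ are both sections of $\varphi^{-1}TN$. Fix $x\in M$ and a normal orthonormal frame $\{e_i\}$ at $x$, so that $\nabla^M_{e_i}e_i=0$ and $\tau(\varphi)=\nabla^\varphi_{e_i}d\varphi(e_i)$ there. Writing $A=\nabla^N\xi$ for the $(1,1)$-tensor $A(U)=\nabla^N_U\xi$, one has $\nabla^\varphi_{e_i}(\xi\circ\varphi)=A(d\varphi(e_i))$, and differentiating once more gives
\[\nabla^\varphi_{e_i}\nabla^\varphi_{e_i}(\xi\circ\varphi)=(\nabla^\varphi_{e_i}A)(d\varphi(e_i))+A(\nabla^\varphi_{e_i}d\varphi(e_i)).\]
The expression $T(U,V)=(\nabla^N_U A)(V)=\nabla^N_U\nabla^N_V\xi-\nabla^N_{\nabla^N_U V}\xi$ is genuinely tensorial in both arguments, so the Jacobi-type equation (\ref{eq1.8}) reads exactly $T(U,U)=-R^N(\xi,U)U$ and may be pulled back and evaluated at $U=d\varphi(e_i)$. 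Summing over $i$ therefore yields
\[\nabla^\varphi_{e_i}\nabla^\varphi_{e_i}(\xi\circ\varphi)=-R^N(\xi\circ\varphi,d\varphi(e_i))d\varphi(e_i)+\nabla^N_{\tau(\varphi)}\xi,\]
and substituting into the definition (\ref{eq1.11}) of $J_\varphi$ the two curvature terms cancel, leaving the clean identity
\[J_\varphi(\xi\circ\varphi)=-\nabla^N_{\tau(\varphi)}\xi.\]

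With this identity in hand I would use the symmetry $I(\xi\circ\varphi,\tau(\varphi))=I(\tau(\varphi),\xi\circ\varphi)$ of the index form (\ref{eq1.10}), which follows from the self-adjointness of $J_\varphi$ (the rough Laplacian is self-adjoint and the curvature term is symmetric by the symmetries of $R^N$). Because $\varphi$ is bi-harmonic, $J_\varphi(\tau(\varphi))=0$, so the right-hand side vanishes, while the left-hand side equals $\int_M h(J_\varphi(\xi\circ\varphi),\tau(\varphi))\,v^g=-\int_M h(\nabla^N_{\tau(\varphi)}\xi,\tau(\varphi))\,v^g$. Recognizing $h(\nabla^N_U\xi,U)=\tfrac12(\mathcal{L}_\xi h)(U,U)$ and applying the soliton equation converts this into
\[\int_M\big[\operatorname{Ric}^N(\tau(\varphi),\tau(\varphi))-\lambda\,|\tau(\varphi)|^2\big]\,v^g=0.\]
Under either hypothesis $\operatorname{Ric}^N>\lambda h$ or $\operatorname{Ric}^N<\lambda h$ (see Remark \ref{remark3}) the integrand has a strict sign wherever $\tau(\varphi)\neq0$, forcing $\tau(\varphi)=0$; that is, $\varphi$ is harmonic. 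Proposition \ref{proposition6.1} then applies and gives that $\varphi$ is constant.

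I expect the main obstacle to be the first step, the derivation of $J_\varphi(\xi\circ\varphi)=-\nabla^N_{\tau(\varphi)}\xi$: this is precisely where the Jacobi-type condition is essential, and one must handle the pull-back connection correctly and verify that the tensorial identity (\ref{eq1.8}) can legitimately be evaluated on the sections $d\varphi(e_i)$ along $\varphi$ and then summed. Everything afterward is formal, relying only on the symmetry of the index form, the soliton equation, and the sign condition on $\operatorname{Ric}^N-\lambda h$.
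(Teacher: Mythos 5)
Your proof is correct and is essentially the paper's own argument: the pointwise identity obtained by pulling back the (tensorial) Jacobi-type condition along $\varphi$, the use of the soliton equation to convert $h(\nabla^N_{\tau(\varphi)}\xi,\tau(\varphi))=\tfrac{1}{2}(\mathcal{L}_\xi h)(\tau(\varphi),\tau(\varphi))$ into $\lambda|\tau(\varphi)|^2-\operatorname{Ric}^N(\tau(\varphi),\tau(\varphi))$, the strict-sign argument forcing $\tau(\varphi)=0$, and the final appeal to Proposition \ref{proposition6.1} all coincide with the paper's steps. The only difference is organizational: where you invoke the self-adjointness of $J_\varphi$ (symmetry of the index form) together with the clean identity $J_\varphi(\xi\circ\varphi)=-\nabla^N_{\tau(\varphi)}\xi$, the paper carries out the same integration by parts explicitly, computing $\operatorname{div}^M$ of the two $1$-forms $h\big(\xi\circ\varphi,\nabla^\varphi_{\cdot}\tau(\varphi)\big)$ and $h\big(\nabla^\varphi_{\cdot}(\xi\circ\varphi),\tau(\varphi)\big)$ and applying the divergence theorem with the curvature symmetry $h(R^N(X,Y)Z,W)=h(R^N(W,Z)Y,X)$, which is precisely the proof of the self-adjointness you cite.
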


\begin{proof}
We set:
\begin{equation}\label{eq7.7}
    \eta(X)=h\big(\xi\circ\varphi,\nabla^\varphi _X \tau(\varphi)\big),\quad X\in\Gamma(TM),
\end{equation}
calculating in a normal frame at $x \in M$, we have:
\begin{eqnarray}\label{eq7.8}
 \operatorname{div}^M \eta
   &=&\nonumber e_i\big[h\big(\xi\circ\varphi,\nabla^\varphi _{e_i} \tau(\varphi)\big)\big]  \\
   &=&  h\big(\nabla^\varphi _{e_i}(\xi\circ\varphi),\nabla^\varphi _{e_i} \tau(\varphi)\big)
        + h\big(\xi\circ\varphi,\nabla^\varphi _{e_i}\nabla^\varphi _{e_i} \tau(\varphi)\big),
\end{eqnarray}
from equation (\ref{eq7.8}), and the bi-harmonicity condition of $\varphi$, we get:
\begin{eqnarray}\label{eq7.9}
 \operatorname{div}^M \eta
   &=&  \nonumber h\big(\nabla^\varphi _{e_i}(\xi\circ\varphi),\nabla^\varphi _{e_i} \tau(\varphi)\big)\\
        &&-h\big(R^N(\tau(\varphi),d\varphi(e_i))d\varphi(e_i),\xi\circ\varphi\big),
\end{eqnarray}
the first term on the left-hand side of (\ref{eq7.9}) is
\begin{eqnarray}\label{eq7.10}
h\big(\nabla^\varphi _{e_i}(\xi\circ\varphi),\nabla^\varphi _{e_i} \tau(\varphi)\big)
   &=&\nonumber e_i\big[h\big(\nabla^\varphi _{e_i}(\xi\circ\varphi),\tau(\varphi)\big) \big]\\
       &&-h\big(\nabla^\varphi _{e_i} \nabla^\varphi _{e_i}(\xi\circ\varphi),\tau(\varphi)\big),
\end{eqnarray}
by equations (\ref{eq7.9}), (\ref{eq7.10}), and the following property:
$$h(R^N(X,Y)Z,W)=h(R^N(W,Z)Y,X),$$
where $X,Y,Z,W\in\Gamma(TM)$, we conclude that:
\begin{eqnarray}\label{eq7.11}
 \operatorname{div}^M \eta
   &=&\nonumber \operatorname{div}^M h\big(\nabla^\varphi _{\cdot}(\xi\circ\varphi),\tau(\varphi)\big)
   -h\big(\nabla^\varphi _{e_i} \nabla^\varphi _{e_i}(\xi\circ\varphi),\tau(\varphi)\big)\\
   && -h\big(R^N(\xi\circ\varphi,d\varphi(e_i))d\varphi(e_i),\tau(\varphi)\big),
\end{eqnarray}
since $\xi$ is a Jacobi-type vector field, we have:
\begin{eqnarray}\label{eq7.12}
 \operatorname{div}^M \eta
   &=& \operatorname{div}^M h\big(\nabla^\varphi _{\cdot}(\xi\circ\varphi),\tau(\varphi)\big)-h\big(\nabla^N _{\tau(\varphi)} \xi,\tau(\varphi)\big),
\end{eqnarray}
by the soliton equation, we get:
\begin{eqnarray}\label{eq7.13}
 \operatorname{div}^M \eta
   &=&\nonumber \operatorname{div}^M h\big(\nabla^\varphi _{\cdot}(\xi\circ\varphi),\tau(\varphi)\big)\\&&-\lambda|\tau(\varphi)|^2+\operatorname{Ric}^N(\tau(\varphi),\tau(\varphi)),
\end{eqnarray}
from equation (\ref{eq7.13}), and the divergence theorem, with $\operatorname{Ric}^N<\lambda h$ (or $\operatorname{Ric}^N>\lambda h$), we get $\tau(\varphi)=0$, i.e. $\varphi$ is harmonic map, so by the Proposition \ref{proposition6.1},  $\varphi$ is constant.
\end{proof}
From Theorem \ref{theorem7}, we deduce:

\begin{corollary}
Let $(M,g,\xi,\lambda)$ be a compact nontrivial Ricci soliton with:
\begin{center}
$\operatorname{Ric}>\lambda g$ \quad or \quad$\operatorname{Ric}<\lambda g$.
\end{center}
Then $\xi$ is not Jacobi-type vector field.
\end{corollary}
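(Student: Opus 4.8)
The plan is to argue by contradiction, applying Theorem \ref{theorem7} to the identity map, exactly in the spirit of the corollary drawn from Theorem \ref{theorem6}. Suppose, to the contrary, that $\xi$ is a Jacobi-type vector field on the compact nontrivial Ricci soliton $(M,g,\xi,\lambda)$. Since $M$ is compact and satisfies $\operatorname{Ric}>\lambda g$ (or $\operatorname{Ric}<\lambda g$), all the hypotheses of Theorem \ref{theorem7} are met once we take the target manifold $(N,h)$ to be $(M,g)$ itself, with the same soliton data $(\xi,\lambda)$.

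Next I would exhibit a bi-harmonic map that is visibly nonconstant. The natural candidate is the identity map $\varphi=Id_M:(M,g)\longrightarrow(M,g)$. Because $Id_M$ is totally geodesic we have $d\varphi=\mathrm{id}_{TM}$ and $\tau(Id_M)=0$, so $Id_M$ is harmonic; consequently it is bi-harmonic as well, since $\tau_2(Id_M)=J_\varphi(\tau(Id_M))=J_\varphi(0)=0$. Thus $Id_M$ is a bi-harmonic map from a compact orientable manifold without boundary into the Ricci soliton $(M,g,\xi,\lambda)$ with $\xi$ of Jacobi type.

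Now I would invoke Theorem \ref{theorem7}: under these hypotheses every bi-harmonic map into $(M,g)$ must be constant, and in particular $Id_M$ would be constant. This is impossible as soon as $\dim M\geq 1$, because the identity map on a positive-dimensional manifold is never constant (its differential is the identity, so $|d\,Id_M|^2=\dim M>0$). This contradiction shows that the initial assumption was false, and therefore $\xi$ is not a Jacobi-type vector field, which is precisely the assertion of the corollary.

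I do not expect any genuine obstacle here, as the argument is a direct specialization of Theorem \ref{theorem7} rather than a new computation. The only point deserving a line of care is the verification that the identity map is bi-harmonic, which is immediate from its total geodesy, and the observation that constancy of $Id_M$ is absurd; if one wishes to be scrupulous about the orientability clause in Theorem \ref{theorem7}, one may pass to the orientable double cover, on which the pulled-back soliton structure and the lifted field $\xi$ retain the Jacobi-type property and the curvature pinching, so that the same contradiction arises.
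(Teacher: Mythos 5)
Your proof is correct and is exactly the deduction the paper intends: the corollary is stated as an immediate consequence of Theorem \ref{theorem7}, obtained (as with the corollary following Theorem \ref{theorem6}, where the paper explicitly takes $M=N$ and $\varphi=Id_M$) by applying the theorem to the identity map, which is harmonic, hence bi-harmonic, yet nonconstant. Your extra remark on handling the orientability hypothesis via the double cover is a careful touch the paper leaves implicit, but it does not change the substance of the argument.
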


\end{document}